\DeclareMathOperator{\codim}{codim}
\newcommand{\sotto}{B}
\newcommand{\cod}{\mathrm{cod} \,}
\newcommand{\QQ}{\mathbb{Q}}
\newcommand{\RR}{\mathbb{R}}
\newcommand{\PP}{\mathbb{P}}
\newcommand{\CC}{\mathbb{C}}
\renewcommand{\epsilon}{\varepsilon}
\newtheorem{thm}{Theorem}[section]
\newtheorem{con}[thm]{Conjecture}
\newtheorem{propo}[thm]{Proposition}
\newtheorem{lem}[thm]{Lemma}
\newtheorem{D}[thm]{Definition}
\title[The elliptic torsion anomalous conjecture in codimension~$2$]{The elliptic torsion anomalous conjecture\\ in codimension~$2$}
\author{Patrik Hubschmid}
\author{Evelina Viada}
\thanks{All authors are supported by the DFG}
\keywords{Effective Diophantine Approximation, Height, Anomalous Intersections}
\subjclass[2000]{11G50, 14G40}
\begin{document}

\begin{abstract}
The torsion anomalous conjecture states that for any  variety $V$ in an abelian variety there are only finitely many maximal  $V$-torsion anomalous varieties. We complement previous results and prove this conjecture for $V$ of codimension $2$ in a product $E^N$  of any elliptic curve $E$. This was known only when $E$ has CM. We also give an effective upper bound for the normalized height of these maximal $V$-torsion anomalous varieties.
\end{abstract}

\maketitle

\section{Introduction}
In this article, by a \emph{variety}, we always mean an \emph{irreducible} algebraic variety defined over the field $\overline{\QQ}$ of algebraic numbers and, by a \emph{point}, we mean a $\overline{\QQ}$-valued point. In particular, we consider subvarieties $V$ of an abelian variety $A$, both defined over $\overline{\QQ}$.
 
We say that $V \subset A$ is a \emph{translate}, respectively a \emph{torsion variety}, if it is a translate of a proper abelian subvariety of $A$ by a point, respectively by a torsion point.

A subvariety $V \subset A$  is \emph{transverse}, respectively \emph{weak-transverse}, if it is not contained in any translate, respectively in any torsion variety.

Of course a torsion variety is in particular a translate, and a transverse variety is weak-transverse. In addition, transverse implies non-translate, and weak-transverse implies non-torsion.

\medskip

In a well known paper,  
Bombieri, Masser and Zannier \cite{BMZ1} introduced  the notions of anomalous and torsion anomalous varieties and stated a general conjecture which implies several classical conjectures such as the Manin-Mumford and the Mordell-Lang Conjectures.  For simplicity, we use here the definition given in  \cite{CVV_rendiconti} by Checcoli,  Veneziano and the second author; unlike in  \cite{BMZ1}, for us points  can be torsion anomalous, but not anomalous. 

We have the following definitions.

\begin{D}
Let $V$ be a subvariety of an abelian variety. A subvariety $Y$ of $V$ is a \emph{$V$-torsion anomalous} variety if 
\begin{itemize}
\item[-] $Y$ is an irreducible component of $V\cap B$ with $B$ a torsion variety;
\item[-]  the dimension of $Y$ is larger than expected, i.e.,  $$\codim Y < \codim V +  \codim  B,$$ where $\codim$ indicates the codimension.
\end{itemize}

The torsion variety $B$ is \emph{minimal} for $Y$ if it satisfies the above conditions and has minimal dimension.
The \emph{relative codimension} of $Y$ is the codimension of $Y$ in its minimal $B$.

We say that $Y$ is a \emph{maximal $V$-torsion anomalous} variety if it is $V$-torsion anomalous and it is not contained in any $V$-torsion anomalous variety of strictly larger dimension.

\end{D}

In the Torsion Openness Conjecture \cite{BMZ1}, Bombieri, Masser and Zannier conjecture that the complement of the set of the torsion anomalous varieties of positive dimension is open. In addition, in the Torsion Finiteness Conjecture, they claim that there are only finitely many maximal torsion anomalous points. In   \cite{CVV_rendiconti} these conjectures are  reformulated in a slightly more general statement.
 \begin{con}[Torsion Anomalous Conjecture]
\label{TORAN}
Let $V$ be a variety embedded in an abelian variety. Then there are only finitely many maximal $V$-torsion anomalous varieties.
\end{con}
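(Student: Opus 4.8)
Although Conjecture~\ref{TORAN} is open in general, the case singled out in the abstract---$V\subset E^N$ with $\codim V=2$ and $E$ an arbitrary elliptic curve---can, I expect, be attacked as follows. The first step is to classify the maximal $V$-torsion anomalous varieties. Let $Y$ be an irreducible component of $V\cap B$ with $B=\zeta+H$ a torsion variety minimal for $Y$, and set $t=\codim B$. The anomalous inequality $\codim Y<\codim V+\codim B=t+2$ forces the relative codimension $\codim_B Y$ to be $0$ or $1$. If $\codim_B Y=0$ then $Y=B$ is a torsion subvariety of $V$, of which there are only finitely many maximal ones by the (effective) Manin--Mumford theorem. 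So the substance lies in the case $\codim_B Y=1$: either $\dim Y=0$, whence $\dim B=1$ and $Y$ is a non-torsion point on a torsion translate of an elliptic curve, or $\dim Y\ge 1$ and $Y$ is a hypersurface in the torsion variety $B$. In every case the goal is to bound, purely in terms of $V$, both the normalized height $\hat h(Y)$ and the degree $\deg Y$; finiteness is then immediate, since the subvarieties of $E^N$ of bounded degree and bounded normalized height form a finite set (Northcott's property for the normalized height).

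The two bounds are produced from opposite directions. For the upper bound on $\hat h(Y)$ one applies the arithmetic B\'ezout theorem to $V\cap B$, using that $B$ is torsion, so $\hat h(B)=0$ and $\hat h(Y)\le c\,\cVv\,\deg B$. In the point case $Y$ already has the expected dimension and this applies directly. In the hypersurface case $Y$ is an \emph{excess} component of $V\cap B$, so B\'ezout cannot be used as such: here minimality of $B$ is invoked to show that the quotient $\pi\colon E^N\to E^N/H$ contracts $Y$ to a torsion point, after which one replaces $B$ by a slightly larger torsion variety $B'$ of codimension $t-1$ (for instance the preimage of a suitable hyperplane section of $E^N/H$ through $\pi(\zeta)$) for which $Y$ \emph{is} a component of the expected dimension of $V\cap B'$. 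Arithmetic B\'ezout on $V\cap B'$ then gives an effective upper bound for $\hat h(Y)$ in terms of $V$ and $\deg Y$, which is also the effective height bound announced in the abstract.

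For the lower bound, minimality of $B$ implies that $Y-\zeta$ is weak-transverse in $H$; depending on whether $Y$ is a translate, one invokes an effective Lehmer-type lower bound for the height of the generating point of $Y$ in the appropriate quotient $E^N/H_Y$ (this covers the point case), or an effective Bogomolov-type lower bound for the essential minimum $\mu(Y)$, so that $\hat h(Y)\ge\mu(Y)\deg Y$ is bounded from below in terms of $\deg Y$ and $\deg H$. Confronting the two estimates makes the powers of $\deg Y$ cancel---this is the step where the hypothesis $\codim V=2$ enters, keeping the exponents on the favourable side---and leaves a bound for $\deg H$. Consequently $\deg B=\deg H$ is bounded, hence $\deg Y\le\deg V\cdot\deg B$ is bounded, and then the B\'ezout bound makes $\hat h(Y)$ bounded; finiteness follows from Northcott as above.

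The principal obstacle is exactly this confrontation of bounds. On one side, extracting an effective height upper bound for the relative-codimension-$1$ anomalous components requires an excess-intersection argument in which one must descend to a transverse intersection without losing quantitative control of any of the degrees and heights involved---this is technically delicate. On the other side---and this is precisely why the statement was previously known only in the CM case---one needs effective Bogomolov- and Lehmer-type lower bounds in $E^N$ whose dependence on the degree is strong enough to dominate the B\'ezout upper bound \emph{for every} elliptic curve $E$, whereas the sharpest such estimates in the literature were proved under the hypothesis of complex multiplication. Securing lower bounds of the required strength in the non-CM case, and checking that the codimension-$2$ bookkeeping still closes with them, is the heart of the matter.
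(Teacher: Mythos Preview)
Your outline reproduces the CM approach of \cite{CVV_rendiconti}, and you correctly identify where it breaks: the confrontation step needs Lehmer/Bogomolov lower bounds whose strength is not known for non-CM $E$. But you then leave this as ``the heart of the matter'' without a resolution, so the proposal is not a proof. The paper does \emph{not} obtain sharper lower bounds in the non-CM case; it avoids them altogether. The key idea you are missing is the auxiliary-translate construction (Theorem~\ref{thm:approximation}): for $Y=H+p_1$ with $p_1$ in a one-dimensional torsion coset, one produces for each parameter $T\ge 1$ a \emph{non-torsion} translate $H_1+p_1$ of dimension~$1$ with $\deg(H_1+p_1)\ll T$ and $h(H_1+p_1)\ll T^{-1/(N-1)}\hat h(p_1)+T$. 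One then shows, using maximality of $Y$, that $Y$ is a component of $V\cap(H+H_1+p_1)$, applies arithmetic B\'ezout there, and combines with $\hat\mu(Y)=\hat h(p_1)$ (Lemma~\ref{lem:Philippon}) and Zhang's inequality. Choosing $T$ appropriately makes the $\hat h(p_1)$-term on the right absorbable into the left, yielding an effective bound on $\hat h(p_1)$ with no Lehmer input. Your idea of enlarging $B$ to a torsion $B'$ cannot do this: a torsion variety has height zero, so B\'ezout gives $h(Y)\ll\deg B'$, and you are back to needing a lower bound to control $\deg B'$.

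There is a second, independent gap. Your endgame ``bounded degree and bounded normalized height $\Rightarrow$ finitely many $Y$ by Northcott'' is false for positive-dimensional translates: for fixed $H$ the family $\{H+p:\hat h(p)\le T\}$ is infinite (already the torsion translates of $H$ are infinitely many with height zero). Northcott applies to points of bounded height \emph{and bounded degree over $\QQ$}, and no such field-degree bound is available here. The paper's finiteness argument is entirely different: having bounded $\hat h(p_1)$ with $p_1$ in a one-dimensional torsion coset, one observes that the points $p_i+\mathrm{Tor}_H$ lie in the set $S_{N-1}(V_T)$ of Theorem~\ref{th:IMRN}, which is not Zariski dense in $V$; hence the closure $X$ of $\bigcup_i Y_i$ is a proper subvariety, and one shows $X$ sits in a component $Z$ of $V\cap A$ (for $A$ the minimal torsion variety containing $X$) with $\codim_A Z=2$ and $\dim Z<\dim V$, reducing by induction on $\dim V$.
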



It is well known that the Torsion Anomalous Conjecture implies several  classical theorems, such as   the Manin-Mumford Conjecture, proven by Raynaud~\cite{raynaud}. However, we are not giving any new proof of this important result, on the contrary we use it in our proof.
\begin{thm}[Manin-Mumford Conjecture] \label{con:MM}
Let $V$ be a proper subvariety of an abelian variety. Then $V$ contains only finitely many torsion varieties which are maximal in $V$. The sum of their degree is effectively bounded in terms of $\deg V$.
\end{thm}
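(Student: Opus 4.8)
The statement combines a finiteness assertion with an effective estimate, and I would establish the two separately. The finiteness of the torsion varieties maximal in $V$ is exactly the classical Manin--Mumford conjecture, due to Raynaud: writing $A_{\mathrm{tor}}$ for the torsion subgroup of the ambient abelian variety $A$, the Zariski closure of $V\cap A_{\mathrm{tor}}$ is a finite union of torsion varieties $B_1+\zeta_1,\dots,B_m+\zeta_m$. Any torsion variety $W\subseteq V$ has its torsion points Zariski-dense in it, so $W$ equals the closure of $W\cap A_{\mathrm{tor}}\subseteq V\cap A_{\mathrm{tor}}$; being irreducible, $W$ is contained in one of the $B_i+\zeta_i$. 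Hence the torsion varieties maximal in $V$ are among the maximal elements of $\{B_1+\zeta_1,\dots,B_m+\zeta_m\}$, a finite set. Since each torsion variety has degree at least $1$, for the quantitative part it suffices to bound $\sum_i\deg(B_i+\zeta_i)$ effectively in terms of $\deg V$.

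For that bound I would combine an effective, quantitative form of Manin--Mumford (proved in the abelian case by David and Philippon, and over tori going back to Amoroso--David and Bombieri--Zannier) with an induction on $\dim V$. We may assume $V$ is not itself a torsion variety, otherwise it is the unique maximal one and the bound is trivial. The key input is an effective Bogomolov-type estimate: there are an effectively computable $\varepsilon=\varepsilon(\deg V)>0$ and $N=N(\deg V)$ such that the points $x\in V$ of normalized height $\hat h(x)<\varepsilon$ not lying on a proper torsion subvariety of $V$ number at most $N$. Applied to $V$, this controls the isolated torsion points of $V$; the positive-dimensional maximal torsion varieties $B_i+\zeta_i$ are then treated by descending induction, together with a B\'ezout/geometry-of-numbers argument bounding, in terms of $\deg V$, the complexity of the algebraic-subgroup equations cutting out each $B_i$, and hence bounding both $m$ and the degrees $\deg(B_i+\zeta_i)$. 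Summing the resulting estimates yields the asserted bound.

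The main obstacle is to keep every constant depending on $\deg V$ alone, independently of $A$ — in particular of $\dim A$ — whereas elementary intersection theory in $A$ introduces factors of $\deg A$. This is handled by first replacing $A$ by the smallest abelian subvariety such that a translate of $V$ is not contained in a proper torsion coset, and by exploiting that the torsion subvarieties at issue are cut out on $V$, so that their degrees are governed by $\deg V$ rather than by $\deg A$. Making the underlying equidistribution/height input effective and uniform in the ambient abelian variety is the genuinely hard step; this is why we invoke the present statement as a known theorem and use it in the sequel rather than reprove it.
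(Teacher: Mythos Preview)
The paper does not prove this statement at all: it is recorded as a known theorem, attributed to Raynaud~\cite{raynaud}, and then used as a black box (only the finiteness part is actually invoked, in handling the type~(1) case at the start of Section~3). Your proposal ultimately lands in the same place---you explicitly say you ``invoke the present statement as a known theorem and use it in the sequel rather than reprove it''---so there is nothing to compare against.

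That said, the sketch you give before reaching that conclusion is not a proof and should not be presented as one. The finiteness reduction to Raynaud is fine, but for the effective clause you wave at ``effective Bogomolov plus induction plus B\'ezout/geometry of numbers'' without carrying out any step, and you yourself flag the real difficulty: getting a bound depending only on $\deg V$ and not on the ambient $A$. Your proposed fix (pass to the smallest abelian subvariety containing a translate of $V$) does not obviously help, since the degree of that subvariety is not a priori controlled by $\deg V$ alone. If you want to keep a sketch here, it would be more honest to simply cite Raynaud for finiteness and an effective source (e.g.\ Hrushovski, or David--Philippon in the relevant setting) for the degree bound, rather than outline an argument you do not complete.
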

For more details on the Torsion Anomalous Conjecture  and its implication one can look at the survey article \cite{VPisa}.

Clearly if $V$ is not weak-transverse then it is itself  $V$-torsion anomalous, and the conjecture holds.
Also, for a hypersurface, Conjecture~\ref{TORAN} is clearly true. Indeed, the intersection of a torsion variety $\sotto$ with a hypersurface is either  $\sotto$ itself or it has the right dimension $\dim \sotto-1$. So the only maximal $V$-torsion anomalous varieties are maximal torsion varieties contained in $V$; thus finitely many by the Manin-Mumford Conjecture.

The Torsion Anomalous Conjecture is known for curves  in a product of elliptic curves (Viada  \cite{ioant}), in abelian varieties with CM (R\'emond \cite{remond09}), in abelian varieties with a positive density of ordinary primes (Galateau \cite{galateau} and Viada \cite{IJNT}) and in abelian varieties (Habegger and Pila \cite{PP}); in a torus  (Maurin  \cite{Maurin}).

Among other results, Bombieri, Masser and Zannier in \cite[Theorem 1.7]{BMZ1} prove Conjecture~\ref{TORAN} for a  variety $V$ of codimension $2$ in $\mathbb{G}^n_m$, where in the proof they use the Ax theorem.

In \cite{CVV_rendiconti}  Checcoli,  Veneziano and  the second author prove Conjecture~\ref{TORAN} for a variety $V$ of codimension $2$ in a product $E^N$ where $E$ is an elliptic curve with CM, avoiding the Ax theorem. Instead, this result relies on a Lehmer type bound which is not available for non CM elliptic curves.
 
In this article, we extend this result by giving a proof of Conjecture~\ref{TORAN} for a subvariety $V$ of codimension $2$ in a power $E^N$ of any elliptic curve $E$ (CM or non-CM). Our main result is the following theorem.

\begin{thm}
\label{th:main}
For a subvariety $V$ of a power $E^N$ of an elliptic curve $E$ with $\codim V = 2$,  there are only finitely many maximal $V$-torsion anomalous varieties $Y$. Furthermore, the normalized height $h(Y)$  is effectively bounded in terms of $E$, $N$ and $V$.
\end{thm}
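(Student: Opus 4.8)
The plan is to split the set of maximal $V$-torsion anomalous varieties $Y$ according to the pair $(\dim Y, \dim B)$, where $B$ is a minimal torsion variety for $Y$, and to treat each case by a combination of a height upper bound for $Y$ coming from the geometry of $V\cap B$ and a height lower bound (essential minimum / Bogomolov-type bound) for subvarieties of $E^N$ that are weak-transverse in some translate. Since $\codim V=2$, the possibilities for $Y$ are quite restricted: writing $\dim B = N - c$ with $c = \codim B \geq 1$, the anomalous condition $\codim Y < \codim V + \codim B = c+2$ together with $Y\subsetneq V\cap B$ forces $\dim Y \in\{\dim B - 1,\ \dim B\} $ to be excluded and in fact $Y$ has codimension either $c$ (so $Y$ is a component of $B\cap V$ of dimension one more than expected, with $V\supseteq$ something of codimension... ) or $c+1$. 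So the two genuinely anomalous regimes are: (i) $Y$ has relative codimension $1$ in $B$, i.e. $\codim_B Y = 1$; and (ii) $\codim_B Y = 0$, which would mean $Y = B \subseteq V$, handled directly by Manin--Mumford (Theorem 1.4) applied inside the abelian subvariety underlying $B$. Thus the crux is case (i).

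First I would reduce to the case where $V$ itself is weak-transverse: if $V$ is contained in some torsion variety $B_0$, then either $\dim B_0$ is small and $V$ is $V$-torsion anomalous itself (so the conjecture is trivial), or one replaces $E^N$ by $B_0$ and lowers $N$, inducting on $N$. Next, for a maximal $Y$ of relative codimension $1$ in its minimal $B$, I would use that $Y$ is a component of $V\cap B$ of dimension $\dim V - 1 = N-3$ inside $B\cong E^{N-c}\times(\text{finite})$, hence $Y$ is a hypersurface-like intersection in $B$; one obtains an effective upper bound for the normalized height $h(Y)$ in terms of $h(V)$, $\deg V$ and $\deg B$ by an arithmetic Bézout / intersection-theoretic argument (as in \cite{CVV_rendiconti}), crucially controlling $\deg B$ in terms of $N$ and $V$ via a bounded-index argument on the finitely many torsion varieties involved. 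Simultaneously, since $Y$ is maximal and $B$ is minimal, $Y$ must be weak-transverse in $B$ (otherwise it sits in a smaller torsion variety, contradicting minimality of $B$ or maximality of $Y$); this lets me invoke a Bogomolov-type lower bound for the essential minimum of weak-transverse subvarieties of $E^N$. The key point is that the available lower bound — which for non-CM $E$ is the one coming from \cite{galateau}/\cite{IJNT}-style estimates or a Zhang/Bombieri--Zannier inequality rather than a Lehmer bound — must beat the height upper bound for all but finitely many components; comparing exponents of $\deg Y$ in the two bounds forces $\deg Y$, hence (by Northcott and boundedness of height and degree) the number of such $Y$, to be bounded.

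The main obstacle is precisely this comparison in the non-CM case: in \cite{CVV_rendiconti} the CM hypothesis supplies a Lehmer-type lower bound $\widehat h(Y) \gg (\log\deg Y)^{-\kappa}\cdot(\text{stuff})$ strong enough to contradict the polynomial-in-$\deg Y$ upper bound, whereas without CM one only has a weaker (Bogomolov, not Lehmer) lower bound, so the naive comparison fails. The way around this, which I expect to be the technical heart of the paper, is to avoid needing a Lehmer bound at all by exploiting the codimension-$2$ hypothesis more cleverly: one shows that a $V$-torsion anomalous $Y$ of relative codimension $1$ forces the existence of an auxiliary transverse subvariety (a translate of $Y$, or a component of a suitable sum $Y\piuc(\text{torsion curve})$) whose height and degree are controlled, and applies instead an \emph{effective} result on transverse/weak-transverse curves and hypersurfaces in $E^N$ — the Bogomolov-type and Lehmer-free bounds available in the work of Viada \cite{ioant} and Galateau--Viada — together with the Manin--Mumford bound of Theorem 1.4 to control the torsion parts. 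Carrying this out requires: (a) a careful case analysis of the dimension of $Y$ and the structure of $B$; (b) effective arithmetic Bézout estimates relating $h(Y),\deg Y$ to $h(V),\deg V,\deg B$; (c) an effective bound on $\deg B$ and on the order of the torsion point defining $B$, via Manin--Mumford; and (d) the Bogomolov-type lower bound in $E^N$, applied in the right translated subvariety. Steps (b)–(d) are each individually known in effective form; assembling them so that the inequalities close without CM is the real content.
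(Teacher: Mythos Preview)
Your case analysis is right at the start (relative codimension $0$ via Manin--Mumford, relative codimension $1$ is the crux), but you miss the crucial further split that organizes the paper: among the relative-codimension-$1$ anomalous $Y$, the \emph{non-translates} were already handled in \cite{CVV_rendiconti} precisely by the ``Bogomolov lower bound versus B\'ezout upper bound'' comparison you sketch, and the new content here is the case where $Y = H + p$ is a \emph{translate}. For translates this comparison strategy breaks down: by Lemma~\ref{lem:Philippon} one has $\hat\mu(Y)=\hat h(p^\perp)$, which is exactly the quantity one is trying to bound, so there is no nontrivial Bogomolov-type lower bound to play off against the B\'ezout upper bound. Your step (d) therefore has no content in the case that matters. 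What the paper actually does for the height is different: write $Y=H+p_1$ with $p_1$ in a one-dimensional torsion coset, apply the approximation Theorem~\ref{thm:approximation} to $p_1$ to produce a translate $H_1+p_1$ of controlled degree and height depending on a parameter $T$, show $Y$ is a component of $V\cap(H+H_1+p_1)$, and then combine arithmetic B\'ezout with Zhang's inequality to obtain a \emph{self-referential} inequality of the shape $\hat h(p_1)\le (\text{const})\,T^{-1/(N-1)}\deg(V)\,\hat h(p_1) + (\text{const})\,T$; choosing $T$ of order $(\deg V)^{N-1}$ closes the loop and bounds $\hat h(p_1)$ and then $h(Y)$.

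Your finiteness argument also has a gap. ``Northcott plus bounded height and degree'' does not finish: there are infinitely many translates $H+p$ in $E^N$ with $\deg(H+p)=\deg H$ fixed and $h(H+p)$ bounded, since $p$ can range over points of bounded N\'eron--Tate height but unbounded arithmetic degree. The paper instead argues as follows: $\deg H=\deg Y$ is bounded by \eqref{ineq:deg_BZ}, so only finitely many $H$ occur; for fixed $H$, the points $p_i$ all lie in one-dimensional torsion cosets and have bounded $\hat h$, hence lie (together with $p_i+\mathrm{Tor}_H$) in the set $S_{N-1}(V_T)$ of Theorem~\ref{th:IMRN}, which is non-Zariski-dense in $V$. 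One then takes the closure $X$ of $\bigcup_i Y_i$, shows $X\subsetneq V$, finds $X$ inside a proper torsion variety $A$ (again by non-density), and concludes by induction on $\dim V$ applied to a component $Z$ of $V\cap A$ of codimension $2$ in $A$. None of the pieces you list in (a)--(d) produces this induction/non-density mechanism.
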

For reducible varieties it is sufficient to apply the theorem to each irreducible component.

Our result is a complement to the analogous result in~\cite{CVV_rendiconti} where Checcoli, Veneziano and Viada prove Conjecture~\ref{TORAN} with the further assumption that  $E$ has CM. In particular, we already know by \cite[Theorem 5.1]{CVV_rendiconti} that there are only finitely many maximal $V$-torsion anomalous varieties which are not  translates. 
To handle the case of translates, for the non CM case, we cannot use a Lehmer type bound like in~\cite{CVV_rendiconti}. Instead, for points we use the approximation process used by Checcoli, Veneziano and Viada in~\cite[\S 3]{CVV_trans} where one constructs a translate of a given dimension with controlled degree and height through a given point in a torsion variety of dimension one. To bound the normalized height for translates of positive dimension we use a more complicated diophantine approximation generalizing the just mentioned process. Finally, to prove the finiteness of such anomalous varieties we use an induction argument and we also use a result of  Viada \cite{Evelina_IMRN} on the non density of certain  points of bounded height.

\section{Preliminaries}
As already mentioned in the introduction, all varieties in this article are defined over the algebraic numbers $\overline{\QQ}$ and we identify $V$ with $V(\overline{\QQ})$.

We denote by $E$ an elliptic curve over $\overline{\QQ}$ together with a fixed Weierstrass equation
\[ E: y^2 = x^3 + Ax + B \]
with $A, B $ algebraic integers. We consider $E$ as a subvariety of $\PP^2$ defined by this equation.
Let $N$ be a positive integer. The $N$-th power $E^N \hookrightarrow (\PP^2)^N$ of the natural inclusion $E \hookrightarrow \PP^2$ composed with the Segre embedding $(\PP^2)^N \hookrightarrow \PP^{3^N-1}$ gives a closed immersion
\[ E^N \hookrightarrow \PP^m \]
for
\[ m :=  3^N-1. \]
We consider a subvariety $X \subset E^N$ as a subvariety of $\PP^m$ via the above embedding and compute its degree $\deg X$ as a subvariety of $\PP^m$.

\subsection{Abelian subvarieties and Lie subalgebras} \label{subsec:absubvarieties}
We recall the analytic uniformization of the elliptic curve $E$.  There is a unique lattice $\Lambda \subset \CC$ such that the map $\CC \rightarrow \PP^2(\CC)$ given by
\[  z \longmapsto \left\{ \begin{array}{lll}
[\wp_\Lambda(z) : \wp'_\Lambda (z) : 1 ] & , & z \not\in \Lambda \\{}
[0:1:0] &, & z \in \Lambda
\end{array}    \right. \]
induces an isomorphism $\CC / \Lambda \stackrel{\sim}{\rightarrow} E(\CC) \subset \PP^2(\CC)$ of complex Lie groups (see~\cite[Theorem VI.5.1]{Sil}). Here $\wp_\Lambda$ denotes the Weierstrass $\wp$-function associated to the lattice $\Lambda$. The $N$-th power of this isomorphism gives the analytic uniformization
\[ \phi: \CC^N / \Lambda^N \stackrel{\sim}{\rightarrow} E^N(\CC) \]
of $E^N$. Note that $\CC^N$ can be identified with the Lie algebra of $E(\CC)$ and the composition of the canonical projection $\pi: \CC^N \rightarrow \CC^N / \Lambda^N$ with $\phi$ can be identified with the exponential map of the Lie group $E^N(\CC)$. By~\cite[8.9.8]{BiGi}, the set of abelian subvarieties of $E^N$ is in natural bijection with the set of complex subvectorspaces $W \subset \CC^N$ for which $W \cap \Lambda^N$ is a lattice of full rank in $W$. Such a $W$ corresponds to the abelian subvariety $B$ of $E^N$ with $B(\CC) = (\phi \circ \pi)(W)$ and we identify $W$ with the Lie algebra of $B$.

We define the \emph{orthogonal complement} $B^\perp$ of an abelian subvariety $B \subset E^N$ with Lie algebra $W \subset \CC^N$ as the abelian subvariety with Lie algebra $W^\perp$ where $W^\perp$ denotes the orthogonal complement of $W$ with respect to the canonical Hermitian structure of $\CC^N$. Note that it can easily be shown that $W^\perp \cap \Lambda^N$ is a lattice of full rank in $W^\perp$ using a similar argumentation as in~\cite[8.9.8]{BiGi}.

We remark that by Masser and W\"ustholz~\cite[Lemma 1.4]{Masserwustholz}, we have $E^N = B + B^\perp$ and that the intersection of an abelian subvariety $B$ and its orthogonal complement $B^\perp$ is finite, indeed we have
\[ |B \cap B^\perp| \ll (\deg B)^2. \]

Moreover, for any two abelian subvarieties $B_1$ and $B_2$ of $E^N$,  the short exact sequence 
$$ 0 \to B_1\cap B_2 \to B_1\times B_2 \to B_1+B_2 \to 0$$ implies 
\[ \frac{\deg (B_1 + B_2)}{(\dim (B_1 + B_2))!} \leq \frac{\deg B_1}{(\dim B_1)!} \cdot \frac{\deg B_2}{(\dim B_2)!}, \]
see \cite[Lemma 1.2]{Masserwustholz}. Hence we have
\begin{equation} \label{ineq:sum} \deg(B_1 + B_2) \leq c_6 \deg B_1 \deg B_2 \end{equation}
with $c_6 := 2^N$.

 Finally, we remark that, for an abelian subvariety $B$ of $E^N$, there is a surjective morphism $\phi_B: E^N \rightarrow E^r$ such that $\ker \phi_B = B + \tau$ for $\tau$ a set of torsion points of absolutely bounded cardinality (by~\cite[Lemma 1.3]{Masserwustholz}). From the above description and Minkowski Theorem,  the morphism $\phi_B$ can be described by a matrix in $\mathrm{Mat}_{r \times N} (\mathrm{End}(E))$ with rows $u_1,\ldots,u_r \in (\mathrm{End}(E))^N$ such that
\[ \prod_{i=1}^r \|u_i\|_2^2 \ll \deg B \ll \prod_{i=1}^r \|u_i\|_2^2 \]
where the constants are effective and only depend on $N$ and $r$. 
Since each ball of given radius in $(\mathrm{End}(E))^N$ only contains finitely many elements, we conclude that there are only finitely many abelian subvarieties of $E^N$ of bounded degree.

\subsection{The normalized height of a subvariety}

We  consider the \emph{normalized height} $h(X)$ of a subvariety $X$  of $\PP^m$ as defined by Philippon in~\cite{patrice}. This is defined in terms of an appropriate height of the Chow form of $X$, which is a multihomogeneous polynomial with coefficients in $\overline{\QQ}$ defined uniquely up to a scalar multiple. 

\subsection{The arithmetic B\'ezout theorem}
One of the central theorems of arithmetic intersection is  the arithmetic B\'ezout theorem~\cite[Theorem 5.5.1 (iii)]{BGS}. This theorem plays a central role in our proof.

\begin{thm}
Let $X$ and $Y$ be subvarieties of $E^N$. If $Z_1, \ldots, Z_g$ are the irreducible components of $X \cap Y$, then we have
\[ \sum_{i=1}^g h(Z_i) \leq \deg(X) h(Y) + \deg(Y) h(X) + \frac{3^N \log 2}{2} \deg (X) \deg (Y). \]
\end{thm}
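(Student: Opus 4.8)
The plan is simply to transport the arithmetic B\'ezout theorem from projective space: in the form we need it, the statement \emph{is} \cite[Theorem 5.5.1 (iii)]{BGS} read through the fixed closed immersion $E^N \hookrightarrow \PP^m$ with $m = 3^N - 1$. Indeed, the degrees $\deg X$, $\deg Y$, $\deg Z_i$ and the normalized heights $h(X)$, $h(Y)$, $h(Z_i)$ occurring in the statement are, by the conventions fixed above, exactly the projective degrees and the Philippon normalized heights of the corresponding subvarieties of $\PP^m$, so there is essentially nothing to do beyond invoking the projective statement.

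First I would regard $X$ and $Y$ as closed subvarieties of $\PP^m$ via that embedding. Since $E^N$ is closed in $\PP^m$, the intersection $X \cap Y$ formed inside $E^N$ coincides with the one formed inside $\PP^m$; hence $Z_1, \dots, Z_g$ are precisely the irreducible components of $X \cap Y$ viewed in $\PP^m$. I would then apply \cite[Theorem 5.5.1 (iii)]{BGS} to $X$ and $Y$ in $\PP^m$ over a common number field of definition, obtaining $\sum_{i=1}^g h(Z_i) \le \deg(X) h(Y) + \deg(Y) h(X) + c(m)\deg(X)\deg(Y)$, and then pass to the absolute (stable) normalized height, which leaves the inequality unchanged. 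The constant that theorem yields for $\PP^m$ is $c(m) = \tfrac{1}{2}(m+1)\log 2$, and substituting $m + 1 = 3^N$ produces the asserted error term $\tfrac{1}{2}\, 3^N \log 2 \cdot \deg(X)\deg(Y)$.

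The only point that requires genuine care — and the nearest thing to an obstacle here — is the matching of normalizations: one has to check that Philippon's normalized height recalled above is the same invariant as the height of the Chow form used in \cite{BGS} (so that the constant really is $\tfrac{1}{2}(m+1)\log 2$ and not some variant), and that the passage from the number-field statement to the statement over $\overline{\QQ}$ is the routine one. Both are a matter of bookkeeping rather than mathematics, and once they are settled the theorem follows at once.
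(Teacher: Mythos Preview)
Your proposal is correct and matches the paper's treatment: the paper does not give a proof but simply quotes the arithmetic B\'ezout theorem from \cite[Theorem 5.5.1 (iii)]{BGS}, applied through the fixed embedding $E^N\hookrightarrow\PP^{3^N-1}$, which is exactly what you do. Your added remarks about matching normalizations and reading off the constant $c(m)=\tfrac12(m+1)\log 2$ with $m+1=3^N$ are the natural bookkeeping that justifies this citation.
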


\subsection{Height functions for  points}
For points $P$ in $E^N$, we  use several different height functions. On the one hand, we use the \emph{absolute logarithmic Weil height} $h(P)$ of $P$ as a point in $\PP^m(\overline{\QQ})$. If the homogeneous coordinates $(P_0 : \cdots : P_m)$ of $P$ lie in a number field~$K$, then 
\[ h(P) = \sum_{v \in M_K} \frac{[K_v : \QQ_v]}{[K : \QQ]} \log \max_i \{|P_i|_v \} \]
where $M_K$ is the set of places of $K$. 

Note that  the absolute logarithmic Weil height $h(P)$ of a point $P$ differs from the normalized height $h(\{P\})$ of $\{P\}$ as a subvariety of $\PP^m$. To avoid confusion, we  write $h_2(P) := h(\{P\})$ for the normalized height of a point and use $h(P)$  for the absolute logarithmic Weil height. By~\cite[(3.1.6)]{BGS} we have
\[ h_2(P) = \sum_{v\ \mathrm{finite}} \frac{[K_v : \QQ_v]}{[K : \QQ]} \log \max_i \{|P_i|_v\} 
+ \sum_{v\ \mathrm{infinite}} \frac{[K_v : \QQ_v]}{[K : \QQ]} \log \left( \sum_i |P_i|_v^2 \right)^{1/2}
\]
and, by the norm estimates $\| x \|_\infty \leq \|x \|_2 \leq \sqrt{m+1} \|x \|_\infty$ on $\RR^{m+1}$, we have the inequalities
\[ h(p) \leq h_2 (p) \leq h(p) + \frac12 \log(m + 1). \]
On the other hand, we also consider the canonical \emph{N\'eron-Tate height} $\hat{h}$  induced by our fixed embedding  of $E^N$ in $\mathbb{P}^m$. 
It is non-negative and for $P = (P_1,\ldots,P_N) \in E^N$ we have $\hat{h}(P) = \sum_{i=1}^N \hat{h}(P_i)$ where $\hat{h}$ denotes the canonical N\'eron-Tate height on $E$ in $\mathbb{P}^2$.

The N\'eron-Tate height $\hat{h}(P)$ of a point $P$ of $E^N$ is related to the normalized height $h_2(P)$ via the inequalities
\begin{eqnarray}
h_2(P) & \leq & 3 \hat{h}(P) + c_1, \label{ineq:normalized_Neron1}  \\ 
\hat{h}(P) & \leq & \frac{h_2(P)}{3} + c_2 \label{ineq:normalized_Neron2} 
\end{eqnarray}
with some explicit positive constants $c_1 = c_1(E, N)$ and $c_2 = c_2(E, N)$ only depending on $E$ and $N$ (see~\cite[(8) and (10)]{CVV_trans} for details). 

\subsection{The essential minimum}
For a subvariety $X \subset E^N$, we consider the \emph{normalized essential minimum} $\mu(X)$ of $X$ defined by
\[ \mu(X) := \mathrm{inf} \{ \theta \in \RR_{\geq 0}\, : \, \{ P \in X \, : \, h_2(P) \leq \theta \}\ \text{is Zariski dense in}\ X \}. \]
By the theorem of successive minima~\cite[p. 161]{ZhangEquidistribution} used in Zhang's proof of the Bogomolov conjecture we have the inequalities
\begin{equation}
\mu(X) \leq \frac{h(X)}{\deg(X)} \leq (1 + \dim X) \mu(X), \label{ineq:zhang}
\end{equation} 
which we refer as \emph{Zhang's inequality}.

We will also use the \emph{N\'eron-Tate essential minimum} $\hat{\mu}(X)$ of $X$ defined by
\[ \hat{\mu}(X) := \mathrm{inf} \{ \theta \in \RR_{\geq 0}\, : \, \{ P \in X \, : \, \hat{h}(P) \leq \theta \}\ \text{is Zariski dense in}\ X \}. \]
We note that we have $\hat{\mu}(X) = 0$ for all torsion varieties $X$ in $E^N$ because each torsion variety contains a dense set of torsion points. By the Bogomolov conjecture, we furthermore know that $\hat{\mu}(X) > 0$ for all non-torsion subvarieties of $E^N$. By~\eqref{ineq:normalized_Neron1} and \eqref{ineq:normalized_Neron2} the two definitions of the essential minimum of a subvariety of $E^N$ are related via the inequalities
\begin{eqnarray}
\mu(X) & \leq & 3 \hat{\mu}(X) + c_1, \label{ineq:mu1} \\ 
\hat{\mu}(X) & \leq & \frac{\mu(X)}{3} + c_2. \label{ineq:mu2}
\end{eqnarray}

\subsection{The essential minimum of a translate}
We use a lemma due to Philippon~\cite{Philippon_preprint} to calculate the N\'eron-Tate essential minimum of a translate $Y = B + p \subset E^N$ of an abelian subvariety $B \subset E^N$ (see also~\cite[Lemma 7.2]{CVV_rendiconti} for details). 

\begin{lem} \label{lem:Philippon}
Let $B + p$ be a translate in $E^N$. Then $B+p=B + p^\perp$ with $p^\perp$ lying in the orthogonal complement of $B$ and
\[ \hat{\mu}(B + p^\perp) = \hat{h}(p^\perp).   \]
\end{lem}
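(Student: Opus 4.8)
The plan is to first put the translate in a normal form and then read off the N\'eron--Tate essential minimum from an orthogonal decomposition of its points.

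First I would produce the point $p^\perp$. By Masser--W\"ustholz we have $E^N = B + B^\perp$ and $B \cap B^\perp$ is finite, hence consists of torsion points; so every $p \in E^N$ can be written $p = b + p^\perp$ with $b \in B(\overline{\QQ})$ and $p^\perp \in B^\perp(\overline{\QQ})$, and then $B + p = B + p^\perp$. If $p = b_1 + p_1^\perp$ is a second such decomposition, then $p^\perp - p_1^\perp = b_1 - b \in (B \cap B^\perp)(\overline{\QQ})$ is a torsion point lying in $B^\perp$, so, since $\hat h$ is invariant under translation by torsion points, $\hat h(p^\perp)$ does not depend on the decomposition. This settles the first assertion of the lemma.

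The heart of the matter is the claim that $B(\overline{\QQ})$ and $B^\perp(\overline{\QQ})$ are orthogonal for the N\'eron--Tate height pairing $\langle\ ,\ \rangle$ on $E^N$. To prove it I would consider the orthogonal projection $e \colon \CC^N \to \CC^N$ onto $\mathrm{Lie}(B)$ along $\mathrm{Lie}(B^\perp) = \mathrm{Lie}(B)^\perp$, for the canonical Hermitian form of $\CC^N$ used to define $B^\perp$. Because $\mathrm{Lie}(B) \cap \Lambda^N$ and $\mathrm{Lie}(B)^\perp \cap \Lambda^N$ are lattices of full rank, $e$ preserves $\Lambda^N \otimes \QQ$ and hence defines an idempotent of $\mathrm{End}(E^N) \otimes \QQ$ whose image is $B \otimes \QQ$ and whose kernel is $B^\perp \otimes \QQ$. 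Moreover, writing $M^\dagger$ for the image of $M$ under the Rosati involution of the product polarisation of $E^N$ (on $\mathrm{Mat}_N(\mathrm{End}(E))$ this is the conjugate transpose, the conjugation being the Rosati involution of $\mathrm{End}(E)$), one has $e^\dagger = e$, precisely because $e$ is self-adjoint for the Hermitian form defining $B^\perp$. Using the standard identity $\langle M x, y \rangle = \langle x, M^\dagger y \rangle$ for $M \in \mathrm{End}(E^N)\otimes\QQ$, one then gets $\langle b, q\rangle = \langle e b, (1-e) q\rangle = \langle b, e(1-e) q\rangle = 0$ for all $b \in B(\overline{\QQ})$ and $q \in B^\perp(\overline{\QQ})$. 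In particular $\hat h(b + p^\perp) = \hat h(b) + \hat h(p^\perp)$ for every $b \in B(\overline{\QQ})$. It then remains to compute $\hat\mu(Y)$ for $Y = B + p^\perp$: every point of $Y$ is $b + p^\perp$ with $b \in B(\overline{\QQ})$, so $\hat h \geq \hat h(p^\perp)$ on $Y$, giving $\hat\mu(Y) \geq \hat h(p^\perp)$; conversely, torsion points are Zariski dense in $B$, and if $t \in B$ is torsion then $t + p^\perp \in Y$ with $\hat h(t + p^\perp) = \hat h(t) + 2\langle t, p^\perp\rangle + \hat h(p^\perp) = \hat h(p^\perp)$, and these points are Zariski dense in $Y$, so $\hat\mu(Y) \leq \hat h(p^\perp)$; combining the two bounds yields $\hat\mu(B + p^\perp) = \hat h(p^\perp)$.

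The only step that is not purely formal is the orthogonality: it forces one to match the Euclidean/Hermitian geometry of the Lie algebra $\CC^N$, which defines $B^\perp$, with the arithmetic of the N\'eron--Tate pairing, which governs $\hat h$, through the Rosati involution of $E^N$; in the CM case this uses crucially that the Rosati involution acts on $\mathrm{End}(E)$ as complex conjugation, which is exactly what makes $e$ self-adjoint. Everything else --- the decomposition, the well-definedness of $\hat h(p^\perp)$, and the density of torsion in $B$ --- is routine, and one could alternatively just invoke Philippon's computation, but the argument above is its content.
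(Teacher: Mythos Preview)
Your argument is correct. The paper itself does not prove this lemma: it merely states it and refers to Philippon's preprint and to \cite[Lemma~7.2]{CVV_rendiconti} for the details. What you have written is essentially the content of those references --- decompose $p$ via $E^N = B + B^\perp$, prove that $B$ and $B^\perp$ are orthogonal for the N\'eron--Tate pairing by exhibiting the Hermitian projection onto $\mathrm{Lie}(B)$ as a Rosati-self-adjoint idempotent of $\mathrm{End}(E^N)\otimes\QQ$, and then use the density of torsion in $B$ to pin down $\hat\mu$. Your remark that the Rosati involution on $\mathrm{End}(E)$ coincides with complex conjugation (trivially in the non-CM case, on the CM field otherwise) is exactly what makes the Hermitian orthogonality defining $B^\perp$ agree with the arithmetic orthogonality for $\hat h$.
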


\subsection{The auxiliary translate}
As mentioned in the introduction, part of the method used here relies on an approximation process for a point in a torsion variety. We need a translate  of given dimension and controlled degree and height passing through the point. This was one of the main ingredients of \cite{CVV_trans} where Checcoli, Veneziano and the second author prove a bound for $C$-torsion anomalous points of relative codimension one on a curve $C$.
We are going to use their  approximation theorem.

\begin{thm}[\cite{CVV_trans} Proposition 3.1 and 3.2] \label{thm:approximation}
Let $P$ be a point in a torsion variety $B \subset E^N$ and $k, s$ integers with  $k,s \in \{1,\ldots,N\}$ and $k \geq \dim B$. Then there are effective positive constants $c_3, c_4, c_5$ depending only on $E$, $N$, $k$ and $s$ such that for every real $T \geq 1$ there exists an abelian subvariety $H \subset E^N$ of codimension $s$ such that
\begin{eqnarray*}
\deg (H + P) & \leq & c_3 T, \\
h(H + P) & \leq & c_4 T^{1-\frac{N}{ks}} \hat{h}(P) + c_5 T.
\end{eqnarray*}
If $E$ is non CM, then the constants $c_3,c_4,c_5$ are explicit.
\end{thm}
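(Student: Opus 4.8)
The plan is to realise $H$ explicitly as the connected component through the origin of the kernel of a surjective morphism $\psi\colon E^N\to E^s$: such an $H$ automatically has $\codim H=s$, and all three quantities we must control --- $\deg(H+P)$, the N\'eron--Tate essential minimum $\hat\mu(H+P)$, and hence $h(H+P)$ --- will be governed by the size of the $\mathrm{Mat}_{s\times N}(\mathrm{End}(E))$-matrix defining $\psi$. A first reduction uses the uniformisation of \S\ref{subsec:absubvarieties} together with the hypothesis that $B$ is a torsion variety. Since by Lemma~\ref{lem:Philippon} the quantity $\hat\mu(H+P)=\hat h(p^\perp)$ depends on $P$ only modulo torsion and modulo $H$, we may compute it assuming $P$ lies in an abelian subvariety $B_0\subseteq E^N$ with $\dim B_0\le k$. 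Fixing a parametrisation of $B_0$ --- an isogeny $E^d\to B_0$ with $d=\dim B_0\le k$, of controlled degree by Minkowski's theorem as in the degree--norm comparison of Masser and W\"ustholz recalled in \S\ref{subsec:absubvarieties} --- we may write $P$ as the image of a point $x=(x_1,\dots,x_d)\in E^d$ under a morphism given by a bounded matrix. The essential gain of this normalisation is that $P$, together with the points of $H$ used below, then lies in the finitely generated subgroup generated by $x_1,\dots,x_d$ and the torsion, on which the N\'eron--Tate height is a positive semidefinite quadratic form in the $\mathrm{End}(E)$-coordinates.

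Granting for the moment a good choice of $\psi$, the two asserted inequalities follow from the material of \S\ref{subsec:absubvarieties}. Write $u_1,\dots,u_s\in(\mathrm{End}(E))^N$ for the rows of the defining matrix, taken linearly independent over $\mathrm{End}(E)\otimes\QQ$ so that $\codim H=s$. Since translation by a point acts trivially on $H^2(E^N)$ we have $\deg(H+P)=\deg H$, and by the degree--norm comparison of \cite{Masserwustholz}, $\deg H\ll\prod_{i=1}^s\|u_i\|_2^2$; choosing the $u_i$ inside a ball of radius $\asymp T^{1/(2s)}$ forces $\prod_{i=1}^s\|u_i\|_2^2\ll T$ and hence $\deg(H+P)\ll T$, which is the first inequality. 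For the second, Zhang's inequality \eqref{ineq:zhang} gives $h(H+P)\le(1+\dim(H+P))\,\mu(H+P)\,\deg(H+P)$, and \eqref{ineq:mu1} gives $\mu(H+P)\le 3\hat\mu(H+P)+c_1$; by Lemma~\ref{lem:Philippon}, $\hat\mu(H+P)=\hat h(p^\perp)$, where $p^\perp$ is the component of $P$ orthogonal to $H$. Since $\dim(H+P)\le N$ and $\deg(H+P)\ll T$, it therefore suffices to choose $\psi$ so that $\hat h(p^\perp)\ll T^{-N/(ks)}\hat h(P)+c$ for an absolute constant $c$; this yields $h(H+P)\ll T^{1-N/(ks)}\hat h(P)+T$, as required.

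It remains to make the choice of $u_1,\dots,u_s$, which is the heart of the matter and is a geometry-of-numbers problem in $(\mathrm{End}(E))^N$. Under the normalisation of the first paragraph, and using that the N\'eron--Tate height is there a quadratic form, one computes that $\hat h(p^\perp)$ equals the squared N\'eron--Tate distance from $P$ to $H$, which is bounded above by $\|\Pi\,\g\|^2\,\hat h(x)$, where $\g$ is the vector (or matrix) over $\mathrm{End}(E)$ encoding $P$, $\Pi$ is orthogonal projection onto the span of $u_1,\dots,u_s$, and $\hat h(P)\asymp\|\g\|^2\hat h(x)$. One is thus led to look for $s$ linearly independent $u_i$ with $\prod_{i=1}^s\|u_i\|_2^2\ll T$ --- so each $\|u_i\|_2\ll T^{1/(2s)}$ --- whose span is as nearly orthogonal to $\g$ as possible. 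A Dirichlet/pigeonhole count, whose effective dimension is governed by $k=\dim B_0$ (more precisely, by the dimension of the real span attached to the coordinates of $x$), produces such vectors for which $\|\Pi\,\g\|^2\ll T^{-N/(ks)}\|\g\|^2+O(1)$; linear independence of the $u_i$ is secured by taking a Minkowski-reduced system, via the second theorem on successive minima. The exponent $1-\tfrac N{ks}$ in the statement is exactly what emerges from balancing the ball radius $T^{1/(2s)}$ against the approximation gain, with $k$ entering through this effective dimension. When $E$ is non-CM one has $\mathrm{End}(E)=\ZZ$, so all the lattice estimates, the constants in Minkowski's and Dirichlet's theorems, and the degree--norm constants become explicit, which gives the last assertion.

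I expect the main obstacle to lie in this last step, in two respects. First, one must produce the $s$ vectors $u_i$ \emph{simultaneously}, controlling at once their size, their linear independence and the orthogonality defect $\|\Pi\,\g\|$, and extract \emph{precisely} the exponent $N/(ks)$: a naive iteration (find one short vector, pass to its orthogonal complement, repeat) degrades the exponent, so the whole system must be built at once, presumably with a case analysis according to how the shortest vectors of $\g^\perp$ compare with $T^{1/(2s)}$. Second, and more conceptually, one must be sure that ``$P$ nearly orthogonal to $H$'' really does bound the canonical height $\hat h(p^\perp)$: this is \emph{not} a purely analytic statement, since $\hat h$ is not a quadratic form in an analytic representative of $P$, and it is precisely the reduction of the first paragraph --- turning $\hat h$ into a quadratic form on the relevant finitely generated group --- together with Lemma~\ref{lem:Philippon} that makes the passage from ``small orthogonal projection'' to ``small essential minimum'' legitimate.
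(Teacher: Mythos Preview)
The paper does not give a proof of this statement: it is quoted verbatim from \cite[Propositions~3.1 and~3.2]{CVV_trans} and used as a black box, with only the remark that the constants are explicit in the non-CM case and are being made explicit in the CM case in \cite{CV_progress2}. So there is no ``paper's own proof'' to compare your proposal against.

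That said, your sketch is in line with the argument actually carried out in \cite{CVV_trans}. There too one constructs $H$ as the connected kernel of a morphism $E^N\to E^s$ given by a matrix with rows $u_1,\dots,u_s\in(\mathrm{End}(E))^N$, bounds $\deg(H+P)=\deg H$ by $\prod_i\|u_i\|_2^2$ via the Masser--W\"ustholz comparison, and then bounds $h(H+P)$ through Zhang's inequality and Lemma~\ref{lem:Philippon}, reducing everything to controlling $\hat h(p^\perp)$. The geometry-of-numbers step is also as you describe: one uses that $P$ lies in a torsion variety of dimension at most $k$ to pull the problem back to $E^k$ (this is where $k$ enters the exponent), and then a Minkowski-type argument in the lattice $(\mathrm{End}(E))^N$ produces $s$ independent short vectors nearly orthogonal to the $k$ relevant directions, yielding exactly the saving $T^{-N/(ks)}$.

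Your two flagged obstacles are real but are handled in \cite{CVV_trans}: the simultaneous construction of the $u_i$ with the correct exponent is done via Minkowski's second theorem rather than by iteration, and the passage from ``small orthogonal projection'' to ``small $\hat h(p^\perp)$'' is made rigorous exactly through the reduction you outline, namely by writing $\hat h$ as a positive semidefinite quadratic form on the $\mathrm{End}(E)$-module generated by the coordinates of a preimage of $P$ in $E^k$. So your proposal is a faithful outline of the cited proof, not an alternative route.
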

Note that, in~\cite{CVV_trans}, the constants are explicit if $E$ is non CM. In a work in progress~\cite{CV_progress2}, S. Checcoli and E. Viada give explicit constants also in the CM case.

\section{Proof of the main theorem}
In this section, we denote by $V$  a weak-transverse subvariety of $E^N$ of codimension $2$. To prove our main Theorem~\ref{th:main}, we have to show that $V$ contains only finitely many maximal $V$-torsion anomalous varieties and that the normalized height of maximal $V$-torsion anomalous varieties is effectively bounded from above in terms of $V$, $E$ and $N$. Note that it is enough to show this for $V$ weak-transverse because a variety $V\subset E^N$ which is not weak-transverse is itself $V$-torsion anomalous. We show that there are only finitely many maximal $V$-torsion anomalous varieties of the following types:
\begin{enumerate}
\item
maximal torsion varieties contained in $V$,

\item
maximal $V$-torsion anomalous varieties which are not translates,

\item
maximal $V$-torsion anomalous points which are not torsion points,

\item
maximal $V$-torsion anomalous translates of positive dimension which are not torsion varieties.
\end{enumerate}
Clearly this covers all possible cases.

Note that, for a $V$-torsion anomalous variety $Y$, there is a unique minimal torsion variety $B \subset E^N$ such that $Y$ is an irreducible component of $V \cap B $. This is easily seen because every non-empty intersection of two torsion varieties in $E^N$ is again a torsion variety. Recall that the \emph{relative codimension} of $Y$ is
$\codim_Y B  = \dim B - \dim Y$ . Since $Y$ is $V$-torsion anomalous, we have
\[ \dim B - \dim Y = \codim Y - \codim B < \codim V = 2. \]
This means that any $V$-torsion anomalous subvariety $Y$ in $V$ is of relative codimension $0$ or $1$. Relative codimension $0$ only occurs if $Y$ is a torsion variety (type (1)). By the Manin-Mumford Conjecture (Theorem~\ref{con:MM}), we see that the maximal torsion subvarieties of $E^N$ contained in $V$ are exactly the irreducible components of the Zariski closure of the set of torsion points in $V$. Therefore, there are only finitely many $Y$ of type (1) and their normalized height is clearly trivially bounded.

For $Y$ of type (2), (3) or (4) the relative codimension is $1$.
 Previous results of Checcoli, Veneziano and the second author show that there are only finitely many $Y$ of type (2) and that their normalized height $h(Y)$ is effectively bounded~\cite[Theorem 5.1]{CVV_rendiconti}. In a subsequent paper the three authors cover the case of $Y$ of type (3). They show that the normalized height of the maximal $V$-torsion anomalous points of relative codimension one is effectively bounded~\cite[Theorem 1.1]{CVV_trans} and that these points are finitely many \cite[Corollary 1.2]{CVV_trans}.

It remains to show that there are only finitely many $Y$ of type (4) and that there is an effective upper bound for their normalized height  only depending on $V$, $E$ and $N$.

From now on, we denote by $Y$ a maximal $V$-torsion anomalous variety of type (4), that is
\begin{itemize}
\item
of the form $Y = H + p$ for a non-trivial abelian subvariety $H$ and a point $p$ in $ E^N$,

\item
an irreducible component of $V \cap B$ where $B \subset E^N$ is a torsion variety with $\dim B = \dim H + 1$.
\end{itemize}

\subsection{Bounded Height}
We first prove the following 
\begin{propo}\label{Height}
Under the same assumption as in Theorem \ref{th:main} we have that the maximal $V$-torsion anomalous varieties $Y$ of type (4) are of the  form $Y = H + p_1$ with $p_1$ lying in a torsion subvariety of dimension~$1$. In addition,  the height of $p_1$ is bounded as 
\[ \hat{h}(p_1) \leq C (\deg V + h(V))(\deg V)^{N - 1} \]
and the normalized height of $Y$ is bounded as
$$h(Y) \le 3 C  (\deg V + h(V))(\deg V)^{2^{\dim V} + N - 1}$$
for some effective constant $C$ depending only on $E$ and $N$. The constant can be made explicit.
\end{propo}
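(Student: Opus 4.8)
The plan is to reduce the problem to the auxiliary-translate construction of Theorem~\ref{thm:approximation} applied to a well-chosen point. First I would analyze the geometry: since $Y=H+p$ is $V$-torsion anomalous of relative codimension one, it is a component of $V\cap B$ with $B$ a torsion variety and $\dim B = \dim H+1$. Using Lemma~\ref{lem:Philippon} I would replace $p$ by its orthogonal projection $p^\perp$ onto $H^\perp$, so that $Y = H+p^\perp$ and $\hat\mu(Y)=\hat h(p^\perp)$. The point $p^\perp$ lies in the torsion variety $B\cap H^\perp$; I would want to show this has dimension $1$ (after a translation by torsion, which is the content of the first assertion: $p_1$ lies in a torsion subvariety of dimension~$1$). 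Dimension $\ge 1$ holds because $p^\perp$ is not forced to be torsion (else $Y$ would itself be a torsion variety, excluded in type (4)); dimension $\le 1$ should follow from minimality of $B$ together with $\dim B=\dim H+1$ and $H^\perp\cap B$ containing $H\cap H^\perp$ which is finite, so $\dim(B\cap H^\perp)\le \dim B-\dim H = 1$.

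Next I would apply Theorem~\ref{thm:approximation} to the point $P=p_1$ (or $p^\perp$) inside the torsion curve $B':=B\cap H^\perp$ of dimension~$1$, taking the parameters $k,s$ appropriately — here $k$ can be taken to be $\dim B' = 1$ or slightly larger as forced by $k\ge\dim B$, and $s$ chosen so that the resulting auxiliary abelian subvariety $\acca'$ has complementary dimension, e.g.\ $s = N-1$. This produces, for each $T\ge 1$, an abelian subvariety $\acca'\subset E^N$ with $\deg(\acca'+p_1)\le c_3 T$ and $h(\acca'+p_1)\le c_4 T^{1-\frac{N}{ks}}\hat h(p_1)+c_5 T$. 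I would then intersect $\acca'+p_1$ (or rather a translate containing $Y$) with $V$ using the arithmetic B\'ezout theorem: $Y$ should be forced to be a component of this intersection for a suitable dimension count, and B\'ezout gives $h(Y)\le \deg(\acca'+p_1)\,h(V) + \deg(V)\,h(\acca'+p_1) + \tfrac{3^N\log 2}{2}\deg(\acca'+p_1)\deg(V)$. Combining with Zhang's inequality~\eqref{ineq:zhang} applied to $Y$, namely $\hat\mu(Y)=\hat h(p_1)$ and $\mu(Y)\le h(Y)/\deg Y$, together with \eqref{ineq:mu2}, yields an inequality of the shape $\hat h(p_1)\le (\text{stuff})\cdot T + (\text{stuff})\cdot T^{1-\frac{N}{ks}}\hat h(p_1) + \dots$, where the coefficients involve $\deg V$ and $h(V)$ polynomially.

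The crux is then to optimize in $T$: choosing $T$ a suitable power of $\deg V$ (roughly $T\asymp(\deg V)^{ks/N}$ or similar, so that the absorbing term $c_4 T^{1-\frac{N}{ks}}/\deg Y$ becomes a small constant $<1$ and can be moved to the left-hand side) collapses the self-referential inequality into the clean bound $\hat h(p_1)\le C(\deg V+h(V))(\deg V)^{N-1}$. Feeding this back into the B\'ezout estimate for $h(Y)$, and tracking the extra factor $(\deg V)^{2^{\dim V}}$ that arises from controlling $\deg Y$ — since $Y\subset V\cap B$ and $\deg B$ can be bounded by $\deg V$ via Manin--Mumford/B\'ezout-type degree estimates, with the $2^N$-type constants from~\eqref{ineq:sum} accumulating into $2^{\dim V}$ in the exponent — gives the stated bound $h(Y)\le 3C(\deg V+h(V))(\deg V)^{2^{\dim V}+N-1}$. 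I expect the main obstacle to be the bookkeeping needed to guarantee that $Y$ genuinely appears as an irreducible component of the B\'ezout intersection $(\acca'+p_1)\cap V$ with the correct (expected) dimension, so that the arithmetic B\'ezout inequality can be applied to isolate $h(Y)$; this requires a careful choice of $s$ and a dimension argument ensuring $\acca'+p_1$ meets $V$ properly along $Y$, possibly after replacing $\acca'$ by $\acca'+H$ or intersecting with $B$. The explicitness of all constants in the non-CM case is inherited directly from the explicit form of $c_3,c_4,c_5$ in Theorem~\ref{thm:approximation} and the explicit constants in~\eqref{ineq:normalized_Neron1}--\eqref{ineq:mu2}.
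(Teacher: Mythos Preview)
Your approach is essentially the paper's, and your instincts about the obstacles are right; two points need correction.

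First, your afterthought ``possibly after replacing $\acca'$ by $\acca'+H$'' is not optional but the heart of the argument: the B\'ezout intersection is $V\cap(H+H_1+p_1)$, not $V\cap(H_1+p_1)$. The claim that $Y$ is a component of this intersection is proved exactly via the maximality of $Y$: if $Y$ were not a component then (since $\dim Y\ge\dim(H+H_1+p_1)-1$) one would have $H_1\not\subset H$ and $H+H_1+p_1\subset V$, and then $H+H_1+p_1\subset V\cap(B+H_1+\zeta)$ would be contained in a strictly larger $V$-torsion anomalous variety, contradicting maximality. Once you work with $H+H_1+p_1$, you also need the intermediate step $\hat\mu(H+H_1+p_1)\le\hat\mu(H_1+p_1)$ (using that torsion points are dense in $H$) together with \eqref{ineq:sum} to control $h(H+H_1+p_1)/\deg H$ in terms of $h(H_1+p_1)$ and $\deg(H_1+p_1)$; this is what makes the self-referential inequality close up after dividing by $\deg H$.

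Second, your explanation of the factor $(\deg V)^{2^{\dim V}}$ is wrong: it does not come from bounding $\deg B$ via Manin--Mumford or from accumulating the constants in~\eqref{ineq:sum}. It comes from \cite[Lemma~2]{BZ}, which bounds the degree of any \emph{maximal translate} contained in $V$ by $(\deg V)^{2^{\dim V}}$; one first checks (via \cite[Lemma~7.1]{CVV_rendiconti}) that a maximal $V$-torsion anomalous translate is automatically a maximal translate in $V$, and then applies the Bombieri--Zannier bound directly to $\deg Y=\deg H$.
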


\begin{proof}
Note that $Y = H + p \subset B=B_0 + \zeta$ with $B_0$ an abelian subvariety and $\zeta$ a torsion point. Thus $0+p-\zeta$ and  $H$ are contained in $ B_0$. Moreover, there is a unique abelian subvariety $B_1 \subset E^N$ of dimension $1$ such that we have $B_0 = H + B_1$ and the Lie algebras of $B_1$ and $H$ are orthogonal to each other as subspaces of $\CC^N$ with the canonical Hermitian structure (we use the identification introduced in Subsection~\ref{subsec:absubvarieties}). This abelian subvariety $B_1$ is equal to the identity component of $H^\perp \cap B_0$ where $H^\perp$ is the orthogonal complement of $H$ in $E^N$ as defined in~\ref{subsec:absubvarieties}. Since $p$ lies in $B  = H + B_1 + \zeta$, we can write
\[ p = h + p_1 \]
with $h \in H$ and $p_1  \in B_1 + \zeta$. In particular, we have $Y = H + p_1$. We apply Theorem~\ref{thm:approximation} to the torsion variety $B_1 + \zeta$, the point $p_1$ lying in the torsion variety $B_1 + \zeta$ and the integers $k = 1$ and $s = N - 1$. By this theorem, for each $T \geq 1$, there is an abelian subvariety $H_1 \subset E^N$ of dimension $1$ such that
\begin{align}
\deg (H_1 + p_1) & \leq  c_3 T,  \label{ineq:deg_H1} \\
h(H_1 + p_1) & \leq  c_4 \frac{\hat{h}(p_1)}{T^{\frac{1}{N-1}}}  + c_5 T. \label{ineq:h_H1}
\end{align}

We claim that $Y$ is an irreducible component of the intersection $V \cap (H + H_1 + p_1)$. To prove this, we note that we have $\dim Y \geq \dim (H + H_1 + p_1) - 1$, where equality holds if and only if $H_1$ is not contained in $H$. Therefore, if $Y$ is no irreducible component of $V \cap (H + H_1 + p_1)$, then $H_1$ is not contained in $H$ and $H + H_1 + p_1$ is contained in $V$. In this case, the translate $H + H_1 + p_1$ would be contained in the intersection $V \cap (B + H_1 + \zeta)$ with
\begin{align*}
 \codim (H + H_1 + p_1) & = \codim Y - 1 = \codim B \\
 & \leq \codim (B + H_1) + 1 < \codim (B + H_1) + \codim V, 
\end{align*}
hence $H + H_1 + p_1 = Y + H_1$ would be contained in a $V$-torsion anomalous subvariety contradicting the maximality of $Y$. This proves that $Y$ is an irreducible component of $V \cap (H + H_1 + p_1)$.

Now we apply the arithmetic B\'ezout theorem to $Y \subset V \cap (H + H_1 + p_1)$ and get
\begin{equation} \label{ineq:bezout}
\begin{aligned}
h(Y) & \leq \deg (V) h(H + H_1 + p_1) + \deg (H + H_1 + p_1) h(V) \\
& + \frac{3^N \log 2}{2} \cdot \deg V \cdot \deg (H + H_1 + p_1).
\end{aligned}
\end{equation}
We recall that we have $Y = H + p_1 = H + p'_1 + \zeta$ with $p'_1 \in B_1 \subset H^{\perp}$ and $\zeta$ the torsion point specified above. Therefore, using Lemma  \ref{lem:Philippon}, the Zhang Inequality  \eqref{ineq:zhang} and the relation~\eqref{ineq:mu2}, we get the inequality 
\begin{equation} \label{ineq:hp_1}
\hat{h}(p_1) = \hat{h}(p'_1) = \hat{\mu}(H + p'_1) = \hat{\mu}(Y) \leq \frac{\mu(Y)}{3} + c_2 \leq \frac{h(Y)}{3 \deg (H)} + c_2.
\end{equation}
We now bound the right hand side of this inequality from above using~\eqref{ineq:bezout} by estimating $h(H + H_1 + p_1)$ and $\deg(H + H_1 + p_1)$ from above. By~\eqref{ineq:zhang} and \eqref{ineq:mu1}, we get
\[ h(H+H_1+p_1) \leq \deg(H + H_1) \cdot (1 + \dim (H + H_1)) \cdot (3 \hat{\mu}(H + H_1 + p_1) + c_1). \]
Note that we have $\hat{h}(\zeta + P) = \hat{h}(P)$ for all $P \in H_1 + p_1$ and torsion points $\zeta$ in $H$. Since the torsion points of $H$ are Zariski dense in $H$ and  by the definition of the N\'eron-Tate essential minimum we get
\[ \hat{\mu}(H + H_1 + p_1) \leq \hat{\mu}(H_1 + p_1). \]
Together with the previous inequality and the Zhang Inequality \eqref{ineq:zhang} and the relation \eqref{ineq:mu2}, we get
\[ h(H+H_1+p_1) \leq \deg(H + H_1) \cdot (N + 1) \cdot \left(\frac{h(H_1 + p_1)}{\deg (H_1)} + 3c_2 + c_1\right). \]
Combined with \eqref{ineq:h_H1}, \eqref{ineq:deg_H1} and \eqref{ineq:sum}, this gives
\begin{equation} \label{ineq:h}
\frac{h(H+H_1+p_1)}{\deg (H)} \leq c_6 \cdot (N + 1)\cdot \left( c_4 \frac{\hat{h}(p_1)}{T^{\frac{1}{N-1}}} + (c_5  + 3c_2c_3 + c_1c_3)T \right).
\end{equation}
 Furthermore, by \eqref{ineq:sum} and \eqref{ineq:deg_H1}, we have
\begin{equation} \label{ineq:deg}
 \frac{\deg(H+H_1+p_1)}{\deg (H)} = \frac{\deg(H+H_1)}{\deg (H)} \leq c_3c_6 T.
\end{equation}
Combining \eqref{ineq:h}, \eqref{ineq:deg} with \eqref{ineq:bezout}, we get
\begin{equation} \label{ineq:hoverd}
 \frac{h(Y)}{\deg(H)}  \leq \frac{c_7 \deg V}{T^{\frac{1}{N-1}}} \hat{h}(p_1) +  (c_8 + c_9 \deg V + c_{10} h(V)) T
\end{equation}
with the effective positive constants
\begin{align*} 
 c_7 & := \mathrm{max} \{1, c_4c_6(N + 1)\}, \\
 c_8 & := c_5 c_6, \\
 c_9 & := c_3 c_6 \left( (3c_2 + c_1)(N + 1)+\frac{3^N \log 2}{2}\right), \\
 c_{10} & := c_3 c_5 c_6
\end{align*}
depending only on $E$ and $N$. Together with \eqref{ineq:hp_1} this gives
\begin{equation}
\left( 3 - \frac{c_7 \deg V}{T^{\frac{1}{N-1}}} \right) \hat{h}(p_1) \leq (c_8 + c_9 \deg V + c_{10} h(V)) T + 3c_2.
\end{equation}
We now choose $T := c_7^{N-1} (\deg V)^{N-1} \geq 1$ and get
\begin{equation} \label{ineq:mu_y}
\hat{\mu}(Y) = \hat{h}(p_1) \leq C(\deg V + h(V))(\deg V)^{N - 1}
\end{equation}  
with $C :=  \frac12 c_7^{N-1} (\mathrm{max} \{ c_9, c_{10} \} +  c_8 + 3 c_2)$. This concludes the proof of the first part of the proposition. 

 For the normalized height $h(Y)$, 
combining~\eqref{ineq:hoverd},  \eqref{ineq:mu_y} and our choice of $T$, we get the upper bound
\[ h(Y) \leq 3C \deg Y \cdot (\deg V + h(V))(\deg V)^{N - 1}. \]

Note that $Y$ is a maximal translate contained in $V$ because all maximal $V$-torsion anomalous translates are maximal translates contained in $V$ by~\cite[Lemma 7.1]{CVV_rendiconti}. Using Lemma 2 from~\cite{BZ} and the inductive construction in its proof, we can therefore bound uniformly the degree of $Y$ in terms of $V$ by
\begin{equation} \label{ineq:deg_BZ}
\deg Y \leq (\deg V)^{2^{\dim V}}.
\end{equation}
Hence we have
\[ h(Y) \leq 3C (\deg V + h(V))(\deg V)^{2^{\dim V} + N - 1} .\]

In addition, the constant $C$  can be made  explicit, because the constants $c_i$ for $i=1,\dots 6$ are  explicit in the non-CM case, and in a preprint~\cite{CV_progress2} are made explicit also for the CM case.
\end{proof}

It remains to show the finiteness.
 
\subsection{Finiteness}

\begin{propo}
Under the same assumption  as in Theorem \ref{th:main}, we have that there exist only finitely many maximal $V$-torsion anomalous varieties of type (4).\end{propo}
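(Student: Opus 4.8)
The plan is to combine the estimates of Proposition~\ref{Height} with a projection to a lower power of $E$ and with the non-density result of Viada~\cite{Evelina_IMRN}, organised as an induction on $N$ whose base cases ($N\le 3$) are vacuous, since a type~(4) variety forces $\dim V\ge 2$, i.e.\ $N\ge 4$.

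First I would reduce to a fixed abelian subvariety. By Proposition~\ref{Height}, every maximal $V$-torsion anomalous variety $Y$ of type~(4) is $Y=H+p_1$ with $\deg H=\deg Y\le(\deg V)^{2^{\dim V}}$ and $\hat h(p_1)\le\kappa$, where $\kappa=C(\deg V+h(V))(\deg V)^{N-1}$ is the explicit bound of that proposition. As an abelian subvariety of $E^N$ of bounded degree corresponds to an integral matrix of bounded norm (Subsection~\ref{subsec:absubvarieties}), only finitely many $H$ occur, and it suffices to bound the number of $Y$ for each fixed $H$. Fix $H$; then $\dim H\le\dim V-1$, since $\dim H=\dim V$ would give $Y=V\subseteq B$ with $B$ a torsion variety, contradicting the weak-transversality of $V$; in particular $r:=\codim H=N-\dim H\ge 3$.

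In the case $\dim H=1$ I would project by $\phi_H\colon E^N\to E^r$, the morphism with $\ker\phi_H=H+\tau$ for a finite set $\tau$ of torsion points (Subsection~\ref{subsec:absubvarieties}). Then $W:=\phi_H(V)$ is irreducible, proper and weak-transverse in $E^r$ (a torsion variety containing $W$ would pull back to one containing $V$); the point $Q:=\phi_H(Y)=\phi_H(p_1)$ satisfies $\hat h(Q)\le\kappa'=\kappa'(V,E,N)$, uniformly over the finitely many $H$ since $\phi_H$ is given by a bounded matrix; and, since by Proposition~\ref{Height} the point $p_1$ lies in a torsion variety $T$ of dimension~$1$, the point $Q$ lies in the torsion variety $\phi_H(T)$, which has dimension exactly~$1$ (otherwise $Y$ would be a torsion variety), hence codimension $r-1\ge 2$ in $E^r$. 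As $\phi_H^{-1}(Q)$ is a finite union of translates of $H$, the coset $p_1+H$, and so $Y$, is determined by $Q$ up to finitely many choices, so it suffices to bound the number of possible $Q$. Here Viada's theorem~\cite{Evelina_IMRN} enters: the points of the weak-transverse $W$ of height $\le\kappa'$ lying in a union of torsion subvarieties of codimension $\ge 2$ are not Zariski dense in $W$, hence contained in a proper closed $W^{*}\subsetneq W$ of effectively bounded degree. An auxiliary induction on $\dim W$ then closes the count — the torsion components of $W^{*}$ by the Manin--Mumford conjecture (Theorem~\ref{con:MM}), and the remaining lower-dimensional weak-transverse ones by the inductive hypothesis (which in smaller ambient dimension includes type~(4)) together with~\cite{CVV_trans} and~\cite{ioant} in the base cases — and back-substitution through $\phi_H$ gives finitely many $Y$.

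For $\dim H\ge 2$ one must first reduce to $\dim H=1$. Projecting out a one-dimensional abelian subvariety $E_0\subseteq H$ with $\phi_{E_0}\colon E^N\to E^{N-1}$, one uses $Y+E_0=Y$ to get $Y=\phi_{E_0}^{-1}(\phi_{E_0}(Y))$, so $Y$ is recovered from the translate $\phi_{E_0}(Y)$ of the smaller subvariety $\phi_{E_0}(H)$; if $E_0$ can be chosen so that $V$ is stabilised by $E_0$, then $\phi_{E_0}(V)$ has codimension~$2$ in $E^{N-1}$ and $\phi_{E_0}(Y)$ is again maximal $\phi_{E_0}(V)$-torsion anomalous of type~(4), whence the inductive hypothesis on $N$ applies; otherwise one works inside the hypersurface $\phi_{E_0}(V)$ and combines the bounded-height and bounded-degree information on $\phi_{E_0}(Y)$ with the argument of the previous paragraph, applied to quotients of $\phi_{E_0}(V)$, to again conclude finiteness. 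The main obstacle is precisely this last reduction, when $\dim H\ge 2$ and $V$ is stabilised by no one-dimensional subvariety of $H$, together with the need to apply Viada's non-density result to the (possibly non-weak-transverse) strata produced by the inductions while keeping every bound effective and uniform over the finite family of subvarieties $H$. Combining the cases $\dim H=1$ and $\dim H\ge 2$ proves the proposition, and hence Theorem~\ref{th:main}.
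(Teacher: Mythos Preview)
Your reduction to a fixed abelian subvariety $H$ via the degree bound $\deg H\le(\deg V)^{2^{\dim V}}$ is correct and is exactly how the paper begins. The genuine gap is in your application of Viada's non-density theorem after projecting by $\phi_H$. Theorem~\ref{th:IMRN} requires the torsion varieties to have codimension at least $\dim W+1$, not merely $\ge 2$. In your $\dim H=1$ case, when $H$ does not stabilise $V$ (the generic situation), the image $W=\phi_H(V)\subset E^{N-1}$ is a hypersurface of dimension $N-2$, while your point $Q=\phi_H(p_1)$ lies in a one-dimensional torsion variety, hence in codimension $N-2$. Since $N-2<(N-2)+1$, Theorem~\ref{th:IMRN} gives no information, and your ``auxiliary induction on $\dim W$'' cannot start. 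Your $\dim H\ge 2$ reduction you yourself flag as incomplete, and routing it through the hypersurface $\phi_{E_0}(V)$ inherits the same codimension obstruction.

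The paper sidesteps projection entirely. Instead of passing to a quotient, it stays in $E^N$ and densifies each $Y_i=H+p_i$ by the torsion of $H$: for every $t\in\mathrm{Tor}_H$ the point $p_i+t$ lies in $Y_i\subset V$, satisfies $\hat h(p_i+t)=\hat h(p_i)\le T$, and sits in the one-dimensional torsion variety $(B_1+\zeta)+t$, which has codimension $N-1=\dim V+1$ in $E^N$. Thus Theorem~\ref{th:IMRN} applies directly to $V$, forcing the Zariski closure $X$ of $\bigcup_i Y_i$ to be strictly contained in $V$. A second application of the same theorem to $X$ shows $X$ is not weak-transverse, so its minimal torsion envelope $A$ is proper; one takes an irreducible component $Z$ of $V\cap A$ containing $X$, uses the maximality of the $Y_i$ to get $\codim_A Z=2$, and finishes by induction on $\dim V$ (not on $N$) applied to the weak-transverse $Z\subset A$. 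The device you are missing is that translating $p_i$ by $\mathrm{Tor}_H$ simultaneously preserves the height bound and keeps the ambient torsion variety one-dimensional, so the codimension hypothesis of Theorem~\ref{th:IMRN} is met without any projection and uniformly in $\dim H$.
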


\begin{proof}[Proof of the Proposition]
We recall that  all maximal $V$-torsion anomalous translates in $E^N$ are of the form $Y = H + p$ with $H$ an abelian subvariety and 
\[ \deg H = \deg Y \leq (\deg V)^{2^{\dim V}}. \]
As recalled in Section~\ref{subsec:absubvarieties} there are only finitely many abelian subvarieties $H$ of $E^N$ with bounded degree.
Thus to prove the proposition it is sufficient to show that, for a fixed abelian subvariety $H$ of $E^N$, there are only finitely many translates of $H$ which are maximal $V$-torsion anomalous.

The proof relies on the following non-density theorem by E. Viada.

\begin{thm}[\cite{Evelina_IMRN} Theorem 1.1 (i)] \label{th:IMRN}
Let $V \subset E^N$ be a weak-transverse subvariety of dimension $d$ and $T \geq 0$ a real number. Then the set
\[ S_{d+1}(V_T) := \{p \in V\,:\, \hat{h}(p) \leq T \} \cap \bigcup_{\cod B \geq d+1} B \]
where $B$ runs over all torsion varieties of codimension at least $d+1$ is not Zariski dense in $V$.
\end{thm}

We use induction on the dimension of $V$. If $V$ is a weak-transverse curve of codimension $2$, then there are no $V$-torsion anomalous translates of positive dimension, therefore the statement follows by part (1), (2) and (3)  discussed at the beginning of this section. 

Assume it is proven for $\dim V=d-1$ and $\codim V=2$, we then show it for $\dim V=d$ and $\codim V=2$.  We assume by contradiction that there are infinitely many maximal $V$-torsion anomalous translates $Y_i = H + p_i$ of $H$. By~Theorem \ref{Height} we can take $p_i \in B_i$ for a torsion variety $B_i$ of dimension one. In addition, $\hat{h}(p_i) \leq T$ for $T := 3 C (\deg V + h(V))(\deg V)^{N - 1}$. Therefore, the union of all points in $p_i+{\rm Tor}_H$ is contained in $S_{N-1}(V_T)$ and by Theorem~\ref{th:IMRN} it is not Zariski dense in $V$. Thus the closure $X$ of  $\cup_i (p_i+H) = \cup_i Y_i$  is strictly contained in $V$, so $\dim X<\dim V$.   Since we assumed that there are infinitely many $Y_i$, we also have $\dim X > \dim H$. 
Let $A$ be the minimal torsion variety containing $X$. Since the union $\cup_i (p_i+{\rm Tor}_H)$ is dense in $X$ and a subset of $S_{N-1}(X_T) \subset S_{\dim X+1}(X_T)$, by Theorem~\ref{th:IMRN} the subvariety $X$ cannot be weak-tranverse in $E^N$. Thus $A\not=E^N$. Now we consider an irreducible component $Z$ of $V \cap A$ which contains $X$. By the maximality of $H+p_i$, the variety $Z$ cannot be $V$-torsion anomalous thus $\codim_A Z=2$ and $\dim Z = \dim A - 2 < N - 2 = \dim V = d$. 
 Consider the weak-transverse variety $Z$ embedded in $A$. Note that the infinitely many $H+p_i$ are subvarieties of $Z$.
Moreover, by an easy check of the codimensional equation, we see that  the $H+p_i$ are $Z$-torsion anomalous in $A$, and by inductive hypothesis finitely many in contradiction to our assumption. 

\end{proof}

\section*{Acknowledgements}
We thank the DFG for  financial support.

\def\cprime{$'$}
\providecommand{\bysame}{\leavevmode\hbox to3em{\hrulefill}\thinspace}
\providecommand{\MR}{\relax\ifhmode\unskip\space\fi MR }
\providecommand{\MRhref}[2]{%
  \href{http://www.ams.org/mathscinet-getitem?mr=#1}{#2}
}
\providecommand{\href}[2]{#2}

\section*{}
\noindent 
Patrik Hubschmid:
Mathematisches Institut, 
Georg-August-Universit\"{a}t G\"{o}ttingen,
Bunsenstra\ss e 3-5,
37073 G\"{o}ttingen, Germany.
Email: patrikh@gmx.ch
\medskip\\
Evelina Viada:
Department of Mathematics, ETH Zurich, R\"amistrasse 101, 8092 Zurich, Switzerland.
Email: evelina.viada@math.ethz.ch

\end{document}